\newcommand{\Rmnum}[1]{\expandafter\@slowromancap\romannumeral #1@}
\begin{document}

\newtheorem{theorem}{Theorem}
\newtheorem{observation}[theorem]{Observation}
\newtheorem{corollary}[theorem]{Corollary}
\newtheorem{algorithm}[theorem]{Algorithm}
\newtheorem{definition}[theorem]{Definition}
\newtheorem{guess}[theorem]{Conjecture}
\newtheorem{claim}[theorem]{Claim}
\newtheorem{problem}[theorem]{Problem}
\newtheorem{question}[theorem]{Question}
\newtheorem{lemma}[theorem]{Lemma}
\newtheorem{proposition}[theorem]{Proposition}
\newtheorem{fact}[theorem]{Fact}

\makeatletter
  \newcommand\figcaption{\def\@captype{figure}\caption}
  \newcommand\tabcaption{\def\@captype{table}\caption}
\makeatother

\newtheorem{acknowledgement}[theorem]{Acknowledgement}

\newtheorem{axiom}[theorem]{Axiom}
\newtheorem{case}[theorem]{Case}
\newtheorem{conclusion}[theorem]{Conclusion}
\newtheorem{condition}[theorem]{Condition}
\newtheorem{conjecture}[theorem]{Conjecture}
\newtheorem{criterion}[theorem]{Criterion}
\newtheorem{example}[theorem]{Example}
\newtheorem{exercise}[theorem]{Exercise}
\newtheorem{notation}{Notation}
\newtheorem{solution}[theorem]{Solution}
\newtheorem{summary}[theorem]{Summary}

\newenvironment{proof}{\noindent {\bf
Proof.}}{\rule{3mm}{3mm}\par\medskip}
\newcommand{\remark}{\medskip\par\noindent {\bf Remark.~~}}
\newcommand{\pp}{{\it p.}}
\newcommand{\de}{\em}
\newcommand{\mad}{\rm mad}
\newcommand{\qf}{Q({\cal F},s)}
\newcommand{\qff}{Q({\cal F}',s)}
\newcommand{\qfff}{Q({\cal F}'',s)}
\newcommand{\f}{{\cal F}}
\newcommand{\ff}{{\cal F}'}
\newcommand{\fff}{{\cal F}''}
\newcommand{\fs}{{\cal F},s}
\newcommand{\s}{\mathcal{S}}
\newcommand{\G}{\Gamma}
\newcommand{\g}{(G_3, L_{f_3})}
\newcommand{\wrt}{with respect to }
\newcommand {\nk}{ Nim$_{\rm{k}} $  }

\newcommand{\q}{\uppercase\expandafter{\romannumeral1}}
\newcommand{\qq}{\uppercase\expandafter{\romannumeral2}}
\newcommand{\qqq}{\uppercase\expandafter{\romannumeral3}}
\newcommand{\qqqq}{\uppercase\expandafter{\romannumeral4}}
\newcommand{\qqqqq}{\uppercase\expandafter{\romannumeral5}}
\newcommand{\qqqqqq}{\uppercase\expandafter{\romannumeral6}}

\newcommand{\qed}{\hfill\rule{0.5em}{0.809em}}

\newcommand{\var}{\vartriangle}

\title{{\large \bf The Alon-Tarsi number of   planar graphs  }}

\author{  Xuding Zhu\thanks{Department of Mathematics, Zhejiang Normal University,  China.  E-mail: xudingzhu@gmail.com. Grant Number: NSFC 11571319.}}

\maketitle

\begin{abstract}
	
	This paper proves that the Alon-Tarsi number of any planar graph is at most $5$, which gives an alternate  proof  of the $5$-choosability as well as the $5$-paintability of   planar graphs.

\noindent {\bf Keywords:}
planar graph;    list colouring; on-line list colouring; Alon-Tarsi number.

\end{abstract}



\section{Introduction}

Assume $G$ is a graph.  We associate to each vertex $v$ of $G$  a variable $x_v$. The graph polynomial $P_G(\vec{x})$ of $G$ is defined as    
$$P_G(\vec{x}) = \prod_{ u \sim v, u < v}(x_v-x_u),$$ 
where $\vec{x} = \{ x_v: v \in V(G)\}$ and $``<"$ is an arbitrary  fixed ordering of the vertices of $G$.
It is easy to see that a mapping $\phi: V \to R$ is a proper colouring of $G$ if and only if 
$P_G(\phi) \ne 0$, where $P_G(\phi)$ means to evaluate the polynomial at $x_v=\phi(v )$ for $v \in V(G)$.  
Thus to find a proper colouring of $G$ is equivalent to find an assignment of $\vec{x}$ so that the polynomial evaluated at this assignment is non-zero. The Combinatorial Nullstellensatz gives a sufficient condition for the existence of such an assignment.

Assume $P(\vec{x})$  a polynomial   with variable set $X$.  An  {\em   index function }  $\eta$ for $P({\vec{x}})$ is a mapping which assigns to each variable $x$ a non-negative integer $\eta(x)$. Given an index function $\eta$, we denote by ${\vec{x}}^{\eta}$ the monomial 
 $\prod_{x \in X}  x^{\eta(x )}$, and denote by $c_{P, \eta}$  the coefficient of ${\vec{x}}^{\eta}$  in the expansion of $P(\vec{x})$. The Combinatorial Nullstellensatz asserts that
 if $\eta$ is an index function with $\sum_{x \in X}\eta(x) $ equals the degree of $P(\vec{x})$ and $c_{P, \eta} \ne 0$,  and  $A_x$ is a set of $\eta(x)+1$ real numbers ( or elements of a field) for each $x \in X$, then   there is an assignment $\phi$ such that $\phi(x ) \in A_x$ for each $x \in X$ and $P(\phi) \ne 0$. In particular, if $c_{P_G, \eta} \ne 0$ and $\eta(x_v) < k$ for all $v \in V$, then $G$ is $k$-choosable.
 This method developed by Alon and Tarsi is now a powerful tool in the study of list colouring of graphs. 
Jensen and Toft  \cite{JT} defined the {\em Alon-Tarsi number}  of $G$ as 
 $$AT(G) = \min \{k: \text{$c_{P_G, \eta} \ne 0$ for some   index function $\eta$  with $\eta(x_v) < k$ for all $v \in V(G)$}\}.$$

As  observed in   \cite{HE}, $AT(G)$ has some distinct features and it is of interest to study $AT(G)$ as a separate graph invariant.
 Let $ch(G)$ be the choice number of $G$ and $\chi_P(G)$ be  the paint number (or the online choice number) of $G$ (cf. \cite{Scha} and \cite{Zhu}).  It follows from a result of Alon- and Tarsi   \cite{AT} and a generalization  of this result by Schauz  \cite{Sch}    $ch(G) \le \chi_P(G) \le  AT(G)$ for any graph $G$.  
   There are graphs for which both inequalities   are strict. However,   upper bounds for the choice number of many natural classes of graphs are also upper bounds for their Alon-Tarsi number. Thomassen \cite{TH} proved that every planar graph $G$ has $ch(G) \le 5$, and Schauz \cite{Scha} showed that  every planar graph $G$ has $\chi_P(G) \le 5$. A natural question (cf.  \cite{HE}) is whether $AT(G) \le 5$ for every planar graph $G$. In this note we answer this question in affirmative.

\begin{theorem}
	\label{thm-main0} If $G$ is a planar graph, then $AT(G) \le 5$. 
\end{theorem}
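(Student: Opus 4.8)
The plan is to pass from the polynomial to the equivalent language of orientations. Recall the Alon--Tarsi interpretation of $c_{P_G,\eta}$: if $D$ is an orientation of $G$ with out-degrees $d^+_D(v)$, then the coefficient of the monomial $\prod_v x_v^{d^+_D(v)}$ in $P_G(\vec x)$ equals, up to sign, $EE(D)-OE(D)$, where $EE(D)$ (resp.\ $OE(D)$) is the number of Eulerian sub-digraphs of $D$ with an even (resp.\ odd) number of arcs, an Eulerian sub-digraph being an arc set in which every vertex has equal in- and out-degree. Consequently $AT(G)\le 5$ as soon as $G$ admits an orientation $D$ with maximum out-degree $\Delta^+(D)\le 4$ and $EE(D)\ne OE(D)$. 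The cheapest way to force $EE(D)\ne OE(D)$ is to take $D$ acyclic, for then the empty set is the only Eulerian sub-digraph and $EE-OE=1$; but an acyclic orientation with $\Delta^+\le 4$ exists only for $4$-degenerate graphs, and planar triangulations such as the icosahedron are not $4$-degenerate. So directed cycles are unavoidable, and the real content of the theorem is to keep $EE(D)-OE(D)$ from vanishing in the presence of cycles.

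Second, I would reduce to triangulations. Adding an edge $e=uv$ multiplies the graph polynomial by $(x_v-x_u)$, so $c_{P_{G+e},\eta}=c_{P_G,\eta^-_v}-c_{P_G,\eta^-_u}$, where $\eta^-_w$ lowers the exponent of $x_w$ by one; hence any index function witnessing a nonzero coefficient for $G+e$ yields, in one of these two terms, an index function with no larger exponents witnessing one for $G$, so $AT(G)\le AT(G+e)$. Thus $AT$ is monotone under subgraphs, and since every planar graph embeds in a planar triangulation it suffices to bound $AT$ for triangulations.

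Third, I would run a Thomassen-style induction on a strengthened statement about near-triangulations. Let $G$ be a plane near-triangulation with outer cycle $C=v_1v_2\cdots v_q$. The aim is to produce an orientation $D$ with $EE(D)\ne OE(D)$ in which the two distinguished adjacent boundary vertices satisfy $d^+_D(v_1)=0$ and $d^+_D(v_2)\le 1$ (the analogue of Thomassen's precoloured pair; note adjacency forbids making both sinks), the remaining boundary vertices satisfy $d^+_D(v_i)\le 2$, and every interior vertex satisfies $d^+_D(v)\le 4$. Applied to a triangulation, with any facial triangle as outer cycle and every interior vertex allotted budget $4$, this gives $AT\le 5$. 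The induction mimics Thomassen exactly: if $C$ has a chord $v_iv_j$, split $G$ along it into two smaller near-triangulations, apply induction to each, and glue the two orientations along the shared arc; if $C$ is chordless, delete the boundary vertex $v_q$, apply induction to the resulting near-triangulation whose outer cycle now runs through the former interior neighbours of $v_q$, and reinsert $v_q$ by orienting its incident edges within the allotted budgets.

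The main obstacle is precisely the bookkeeping of $EE(D)-OE(D)$ across these two operations, since this is where cycles are created or merged. For the reinsertion step one would like the new vertex to be a sink relative to the already-oriented part, for then it lies in no Eulerian sub-digraph and $EE-OE$ is unchanged; but the budgets generally force some of its arcs to point outward, and these may close directed cycles with the rest of $G$. The cleanest route I would try is to strengthen the inductive hypothesis from $EE(D)\ne OE(D)$ to the sharper $EE(D)-OE(D)=\pm 1$, track the sign, and exhibit a parity-reversing involution on the Eulerian sub-digraphs affected by the new arcs, leaving a single fixed point. For the chord step one must understand how Eulerian sub-digraphs of the union interact along the single shared arc $v_iv_j$: since the two pieces meet only in $\{v_i,v_j\}$ and the arc joining them, I would arrange that one of $v_i,v_j$ is a sink or source within one piece so that no Eulerian sub-digraph can cross the chord, which makes the two differences combine multiplicatively and forces nonvanishing. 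Verifying that the boundary budgets can always be met while keeping this involution/multiplicativity intact — in particular checking the handful of configurations arising when $v_q$ has two or more interior neighbours — is the delicate part that the induction must carry, and I expect it to be where essentially all the work lies.
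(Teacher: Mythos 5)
Your framework matches the paper's: the Eulerian-subdigraph reformulation of the coefficients, the reduction to a Thomassen-style induction on near-triangulations with out-degree budgets ($0$ and $1$ on the distinguished pair, $2$ on the rest of the boundary, $4$ inside), the chord-splitting step, and the deletion of a boundary vertex. The reduction to triangulations via monotonicity of $AT$ under edge-addition is correct but unnecessary; the paper works with general plane graphs directly. The chord case is essentially right, except that to block Eulerian subdigraphs from crossing the chord you need \emph{both} of its endpoints to be sinks in the piece not containing $e$ (which the induction hands you, since they are that piece's distinguished pair); one sink or source is not enough, as a crossing cycle only needs a directed path through the other piece from one endpoint to the other.

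The genuine gap is in the reinsertion step, and your proposed fix --- strengthening the hypothesis to $EE(D)-OE(D)=\pm 1$ and finding a parity-reversing involution on the Eulerian subdigraphs through the new vertex --- is not the mechanism that makes this work. When $v_q$ is put back with out-degree $2$ (arcs to $v_1$ and $v_{q-1}$, all arcs from the interior neighbours $u_1,\dots,u_k$ pointing in), the troublesome Eulerian subdigraphs are those using $(v_q,v_{q-1})$ together with some $(u_i,v_q)$, closed up by a directed path from $v_{q-1}$ to $u_i$ inside $G'$. There is no involution within the single orientation $D$ that cancels these; in general they do \emph{not} cancel. The paper's key idea, absent from your sketch, is a dichotomy over a second, shifted budget on $G'$ (call an orientation of $G'$ \emph{special} if $v_{q-1}$ has out-degree at most $1$ and each $u_j$ at most $3$, all else as before). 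Either some special orientation $D'$ has $|EE(D')|\ne|OE(D')|$ --- then reinsert $v_q$ with out-degree $1$, its only out-arc going to the sink $v_1$, so no new cycles arise at all --- or \emph{every} special orientation satisfies $|EE|=|OE|$ --- then reinsert with out-degree $2$, and for each $i$ the subdigraphs through $(u_i,v_q)$ are carried, by taking the symmetric difference with a fixed directed cycle $C_i$ through $(u_i,v_q),(v_q,v_{q-1})$, bijectively and parity-uniformly onto the Eulerian subdigraphs of the special orientation obtained from $D''$ by reversing $C_i\cap D''$; these cancel precisely because of the case hypothesis, not because of any intrinsic $\pm1$ normalization. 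This two-case structure is the Alon--Tarsi substitute for Thomassen's ``reserve two colours for $v_q$'' and is the heart of the proof; without it the induction does not close.
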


\section{A proof of Theorem \ref{thm-main0}}

The proof of Theorem \ref{thm-main0} is parallel to Thomassen's proof   of the $5$-choosability of planar graphs in \cite{TH}.

For simplicity, we write $c_{G,\eta}$ for $c_{P_G, \eta}$, and say $\eta$ is an index function of $G$ instead of an index function for $P_G({\vec{x}})$. 

\begin{definition}
	\label{def-nice} Assume $G$ is a plane graph and   $e=v_1v_2$ is a boundary edge of $G$. An index function   $\eta$ of $G-e$ is a  {\em nice  } for $(G,e)$ if the following hold:
	\begin{itemize}
		\item $c_{ {G-e}, \eta} \ne 0$.
		\item  $\eta(v_1) = \eta(v_2)=0$, $\eta(v) \le 2$ for every other boundary vertex $v$, and $\eta(v) \le 4$ for every interior vertex $v$.
	\end{itemize}  
\end{definition}

If $\eta$ is a nice index function for $(G,e)$, then let $\eta'(x)=\eta(x)$ except that $\eta'(x_{v_1})=1$. As $P_G(\vec{x}) = (x_{v_1}-x_{v_2})P_{G-e}(\vec{x})$ and $\eta'(v_2)=0$, we know that $c_{G,\eta'} =c_{G-e, \eta} \ne 0$. Note that $\eta'(x_{v}) < 5$ for each vertex $v$. Thus Theorem \ref{thm-main0} follows from Theorem \ref{thm-2} below.

\begin{theorem}
	\label{thm-2}
	Assume $G$ is a plane graph and   $e=v_1v_2$ is a boundary edge of $G$. Then there exists a nice index function $\eta$ for $(G,e)$. 
\end{theorem}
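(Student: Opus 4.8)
The plan is to induct on $|V(G)|$. Since $P_{G-e}$ is homogeneous of degree $|E(G-e)|$, every index function with a nonzero coefficient is automatically top-degree, and I will certify $c_{G-e,\eta}\neq0$ by pinning down a \emph{single} surviving monomial in a factorization of $P_{G-e}$; it is useful to picture such an $\eta$ as the out-degree sequence of an orientation of $G-e$, an acyclic one giving $c_{G-e,\eta}=\pm1$.

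Two reductions come first. If $\tilde G\supseteq G$ is a plane supergraph on the same vertices with the same outer face (so $e$ is still a boundary edge and the interior/boundary split is unchanged), then $P_{\tilde G-e}=P_{G-e}\cdot Q$ with $Q$ the product over the new edges, so $c_{\tilde G-e,\tilde\eta}=\sum_{\alpha+\beta=\tilde\eta}c_{G-e,\alpha}\,c_{Q,\beta}$; a nonzero coefficient for $\tilde G$ therefore yields some $\alpha\le\tilde\eta$ with $c_{G-e,\alpha}\neq0$, and $\alpha$ inherits all the bounds together with $\alpha(v_1)=\alpha(v_2)=0$. Hence I may add edges and assume $G$ is $2$-connected, with outer cycle $C=v_1v_2\cdots v_p$ and all inner faces triangles.

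\emph{Chord case.} If $C$ has a chord $uv$, it cuts $G$ into $G_1\ni e$ and $G_2\ni uv$ sharing only the edge $uv$, so $P_{G-e}=P_{G_1-e}\cdot P_{G_2-uv}$. Induction gives nice $\eta_1$ for $(G_1,e)$ and $\eta_2$ for $(G_2,uv)$, the latter with $\eta_2(u)=\eta_2(v)=0$; put $\eta=\eta_1\cup\eta_2$, adding values at the shared $u,v$. The two factors overlap only in $x_u,x_v$, so $c_{G-e,\eta}$ is a sum over splittings of the $u,v$-exponents; but $P_{G_2-uv}$ is homogeneous and its exponents away from $u,v$ are pinned to $\eta_2$, so the only split of the correct total degree leaves $u,v$-exponent $0$ in that factor. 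Thus $c_{G-e,\eta}=c_{G_1-e,\eta_1}\,c_{G_2-uv,\eta_2}\neq0$, and since $u,v$ are boundary in $G_1$ all bounds are inherited.

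\emph{Chordless case and the main obstacle.} If $C$ has no chord, let $v_p$ be the neighbour of $v_1$ on $C$ other than $v_2$; its neighbours are $v_1,v_{p-1}$ and an interior fan $u_1,\dots,u_m$. Deleting $v_p$ gives a smaller near-triangulation $G'$ and $P_{G-e}=P_{G'-e}\cdot\prod_{w\in N(v_p)}(x_{v_p}-x_w)$; let $\eta'$ be nice for $(G',e)$. If $\eta'(v_{p-1})\le1$ I take the coefficient of $x_{v_p}^{1}$: the constraint $\eta(v_1)=0$ singles out the \emph{unique} term of the product omitting $x_{v_1}$, yielding $\eta=\eta'+\mathbf 1_{\{v_{p-1},u_1,\dots,u_m\}}$ with $\eta(v_p)=1$, $c_{G-e,\eta}=\pm c_{G'-e,\eta'}\neq0$, and all bounds satisfied because $\eta(v_{p-1})\le2$ and $\eta(u_i)\le3$. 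The difficulty, which I expect to be the crux, is the unavoidable sub-case $\eta'(v_{p-1})=2$: there $v_p$ must get exponent $2$, forcing $\eta=\eta'+\mathbf 1_{\{u_1,\dots,u_m\}}$, and $c_{G-e,\eta}$ becomes a signed sum $\pm\big(c_{G'-e,\eta'}+\sum_j c_{G'-e,\eta'-e_{v_{p-1}}+e_{u_j}}\big)$ whose terms can cancel — for the wheel on an outer $4$-cycle with one central vertex it vanishes identically, and in fact \emph{no} nice $\eta$ for that wheel is reachable through $G-v_p$, since any such $\eta$ has $\eta(u_i)\le\eta'(u_i)+1\le3$ whereas the central interior vertex is forced to exponent $4$. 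Escaping this requires genuinely exploiting the gap between the interior bound $4$ and the boundary bound $2$: the surplus out-degree created at $v_p$ must be absorbed by the now-interior fan vertices $u_i$ (equivalently, the fan edges creating the directed triangle $v_pv_{p-1}u_1$ and the longer return paths must be reversed), and showing that acyclicity and every degree bound can be maintained simultaneously is the heart of the argument.
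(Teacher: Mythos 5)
Your chord case and your ``good'' sub-case (where you can give $v_p$ exponent $1$) are correct and match the paper's argument, and your diagnosis of where the difficulty lies is accurate --- including the wheel example, which really does force the relaxation you suspect is needed. But the proof is not complete: you explicitly leave the sub-case $\eta'(v_{p-1})=2$ unresolved, and the idea you are reaching for (repairing a single inductively-given orientation by reversing paths while preserving acyclicity) is not how the paper escapes; indeed the paper's certificates are not acyclic orientations and the relevant coefficients need not be $\pm1$.

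The missing idea is a dichotomy over an auxiliary class of index functions for $G'-e$, not a case split on the one nice $\eta'$ handed to you by induction. Call $\eta'$ \emph{special} if $\eta'(v_1)=\eta'(v_2)=0$, $\eta'(v_{p-1})\le 1$, $\eta'(u_j)\le 3$ for the fan vertices (note: this \emph{exceeds} the bound $2$ that ``nice'' imposes on them as boundary vertices of $G'$ --- this is exactly the slack between the interior bound $4$ and the boundary bound $2$ that you sensed was needed), and the usual bounds elsewhere. If \emph{some} special $\eta'$ has $c_{G'-e,\eta'}\ne 0$, you are done directly: give $v_p$ exponent $1$ and add $1$ to $v_{p-1}$ and to each $u_j$, which lands at $\le 2$ and $\le 4$ respectively, so no appeal to a nice index function for $G'$ is needed at all (this is how the wheel is handled, with the hub at $3+1=4$). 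If \emph{every} special $\eta'$ has $c_{G'-e,\eta'}=0$, take the nice $\eta''$ for $(G',e)$ from induction, give $v_p$ exponent $2$, and add $1$ to each $u_j$; in the expansion of $\prod_{w\in N(v_p)}(x_{v_p}-x_w)\cdot P_{G'-e}$ the terms involving $x_{v_1}$ or $x_{v_p}^{\ge 3}$ die for degree reasons as in your easy sub-case, and the dangerous cross-terms --- the ones involving $x_{v_{p-1}}$ whose cancellation you worried about --- each contribute $c_{G'-e,\eta'}$ for some \emph{special} $\eta'$ (subtracting $1$ at $v_{p-1}$ puts it at $\le 1$, and the $u_j$ may sit at $3$), hence vanish by the case hypothesis, leaving $c_{G-e,\eta}=c_{G'-e,\eta''}\ne 0$. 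This is the Alon--Tarsi analogue of Thomassen's reservation of two colours for $v_p$; without this global dichotomy the cross-terms cannot be controlled, so as written your argument has a genuine gap at its declared crux.
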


 A variable $x$ is a {\em dummy variable} in $P({\vec{x}})$ if  $x$ does not really occur in $P({\vec{x}})$, or equivalently, $\eta(x)=0$ for each monomial ${\vec{x}}^{\eta}$ in the expansion of $P$ with a nonzero $c_{P, \eta}$.    We shall frequently need to consider the summation and the product of polynomials.
By introducing dummy variables, we assume   the involved polynomails in the sum or the product have the same set of variables. For example, 
we may view $x_2^2$ be the same as $x_1^0x_2^2x_3^0\ldots x_n^0$, i.e,  $x_2^2={\vec{x}}^{\eta}$, where the variable set is $X=\{x_1, x_2, \ldots, x_n\}$ and $\eta(x_2)=2$, $\eta(x_i)=0$ for $i \ne 2$. We  denote by $X$ the set of variables for   polynomials in concern. For two index functions $\eta_1, \eta_2$,
we write $\eta_1 \le \eta_2$ if $\eta_1(x) \le \eta_2(x)$ for all $x \in X$, and $\eta= \eta_2-\eta_1$ means that $\eta(x) = \eta_2(x)-\eta_1(x)$ for all $x \in X$.

 \begin{observation}
 	\label{obs}
 \mbox{}	  
 	 \begin{enumerate}
 	 	\item If $P(\vec{x})=\alpha P_1(\vec{x})+ \beta P_2(\vec{x})$, then  $c_{P, \eta}=\alpha c_{P_1, \eta} + \beta c_{P_2, \eta}.$ 
 	 	\item If $P({\vec{x}}) =  {\vec{x}}^{\eta'}P_1({\vec{x}})$, then 
 	 	$c_{P, \eta}=  c_{P_1, \eta-\eta'}.$ 
 	 	\item If $P({\vec{x}}) =   {\vec{x}}^{\eta'}P_1({\vec{x}})$ and $\eta' \not\le \eta$, then $c_{P, \eta}=0$.
 	 	\item  If $P({\vec{x}}) =  P_1({\vec{x}})P_2({\vec{x}})$ and for any $\eta'$ with $c_{P_2,\eta'}\ne 0$,  there is a dummy variable $x$ of $P_1({\vec{x}})$ such that $\eta'(x) \ne \eta(x)$,  then $c_{P, \eta}=0$.
 	 	\item If $G$ is a graph and $c_{ G, \eta} \ne 0$, then $\sum_{x \in X} \eta(x) = |E(G)|$. 
 	 \end{enumerate} 
 \end{observation}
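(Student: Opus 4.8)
The plan is to establish all five items by one elementary device: expand each polynomial explicitly as a sum of monomials $\sum_{\mu} c_{P,\mu}\,\vec{x}^{\mu}$ over index functions $\mu$, and then read off the coefficient of the target monomial $\vec{x}^{\eta}$ directly. Throughout I adopt the convention that $c_{P,\mu}=0$ whenever $\mu$ is negative on some variable, since a genuine monomial of a polynomial never has a negative exponent. This lets me manipulate index functions additively (writing $\mu+\eta'$ and $\eta-\eta'$) without separate case analysis, and it makes items (2) and (3) two faces of a single computation.

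Item (1) is just the linearity of the operation ``extract the coefficient of $\vec{x}^{\eta}$'': expanding $\alpha P_1+\beta P_2$ and collecting the $\vec{x}^{\eta}$ term gives $\alpha c_{P_1,\eta}+\beta c_{P_2,\eta}$. For item (2) I would write $P_1=\sum_{\mu}c_{P_1,\mu}\vec{x}^{\mu}$, so that $\vec{x}^{\eta'}P_1=\sum_{\mu}c_{P_1,\mu}\vec{x}^{\mu+\eta'}$; the monomial $\vec{x}^{\eta}$ comes precisely from the term with $\mu=\eta-\eta'$, yielding $c_{P,\eta}=c_{P_1,\eta-\eta'}$. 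Item (3) is then the case $\eta'\not\le\eta$: some variable $x_0$ has $\eta'(x_0)>\eta(x_0)$, so $\eta-\eta'$ is negative there and $c_{P_1,\eta-\eta'}=0$ by the convention; equivalently, every monomial of $\vec{x}^{\eta'}P_1$ carries exponent at least $\eta'(x_0)$ in $x_0$, too large to equal $\eta(x_0)$, so $\vec{x}^{\eta}$ cannot appear.

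The only item needing a little more care is (4). Here I would expand the product into $P_1P_2=\sum_{\mu,\eta'}c_{P_1,\mu}c_{P_2,\eta'}\vec{x}^{\mu+\eta'}$ and collect to obtain the convolution $c_{P,\eta}=\sum_{\eta'}c_{P_1,\eta-\eta'}c_{P_2,\eta'}$. It then suffices to show that every summand vanishes. A summand can survive only if $c_{P_2,\eta'}\ne0$, and for each such $\eta'$ the hypothesis provides a dummy variable $x$ of $P_1$ with $\eta'(x)\ne\eta(x)$, hence $(\eta-\eta')(x)\ne0$. But $x$ being dummy in $P_1$ means every monomial of $P_1$ with nonzero coefficient has exponent $0$ in $x$; since $\eta-\eta'$ has nonzero exponent at $x$ (or is negative there), $c_{P_1,\eta-\eta'}=0$, killing the summand. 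Therefore $c_{P,\eta}=0$.

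Finally, item (5) is a homogeneity statement. Since $P_G(\vec{x})=\prod_{u\sim v,\,u<v}(x_v-x_u)$ is a product of $|E(G)|$ linear forms, each homogeneous of degree $1$, the whole product is homogeneous of total degree $|E(G)|$; thus every monomial $\vec{x}^{\eta}$ occurring with nonzero coefficient satisfies $\sum_{x\in X}\eta(x)=|E(G)|$, which is the assertion whenever $c_{G,\eta}\ne0$. I expect item (4) to be the only place demanding attention, precisely in invoking the definition of a dummy variable and in staying consistent with the negative-index convention; the remaining four identities fall out immediately from expanding into monomials.
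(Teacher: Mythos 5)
Your proof is correct. The paper states this Observation without any proof, treating all five items as immediate consequences of expanding polynomials into monomials; your write-up --- the negative-exponent convention, the shift argument for items (2) and (3), the convolution formula plus the dummy-variable argument for item (4), and homogeneity of $P_G$ as a product of $|E(G)|$ linear forms for item (5) --- is exactly the formalization of the reasoning the paper leaves implicit, so it matches the paper's (omitted) approach and is complete.
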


\noindent
{\bf Proof of Theorem \ref{thm-2}}
	Assume the theorem is not true and $G$ is a minimum counterexample.
	
	First we consider the case that $G$ has a chord  $e'=xy$. Let $G_1, G_2$ be the two $e'$-components  (i.e., $G_1, G_2$ are induced subgraphs of $G$ with  $V(G) = V(G_1) \cup V(G_2)$ and $V(G_1) \cap V(G_2) = \{x,y\}$) with $e \in G_1$.

	By the minimality of $G$, there exist  a  nice index function $\eta_1$ for $(G_1,e)$, and a nice index function $\eta_2$ for   $(G_2,e')$.
	Let $\eta=\eta_1+\eta_2$. Note that  $$	P_{G-e}(\vec{x})= P_{G_1-e}(\vec{x}) P_{G_2-e'}(\vec{x}).$$  
Let	 $R(\vec{x}) =  c_{ {G_2-e}, \eta_2} {\vec{x}}^{\eta_2}$, $Q(\vec{x}) = P_{G_2-e'}({\vec{x}}) -R(\vec{x})$, 
$P_1({\vec{x}}) =  R(\vec{x}) P_{G_1-e}(\vec{x})$ and $P_2({\vec{x}}) = Q(\vec{x})P_{G_1-e}(\vec{x}) $.   Then $P_{G-e}({\vec{x}}) = P_1({\vec{x}})+P_2({\vec{x}})$.

	By (5) of Observation \ref{obs}, for any index function $\eta'$ with $c_{Q,\eta'} \ne 0$, we have $\eta' \ne \eta_2$ and hence there is a vertex $v \in V(G_2) -\{x,y\}$ such that $\eta'(x_v) \ne \eta_2(x_v) = \eta(x_v)$. As $x_v$ is a dummy variable in $P_{G_1-e}({\vec{x}})$, by (4) of Observation \ref{obs},   we have $c_{P_2,\eta}=0$. By (1) and (2) of Observation \ref{obs}, 
	$$c_{ {G-e}, \eta}  = c_{ P_1, \eta}  = c_{  {G_2-e'}, \eta_2}   c_{ {G_1-e}, \eta_1}\ne 0.$$
	So  $\eta$ is a nice index function for $(G,e)$.
	 
	Assume $G$ has no chord and assume $B(G)=(v_1, v_2, \ldots, v_n)$.

Let $G'=G-v_n$. 
	Let $v_1, u_1, u_2, \ldots, u_k, v_{n-1}$ be the neighbours of $v_n$. Let $$S({\vec{x}}) = (x_{v_n}-x_{v_1})(x_{v_{n-1}}-x_{v_n })( x_{u_1}-x_{v_n }) \ldots ( x_{u_k}-x_{v_n }).$$ Then 
		  $P_{G-e}(\vec{x})=S({\vec{x}})P_{G'-e}(\vec{x})   .$ 
	 
	 If $n=3$, then let $\eta' $ be nice for $(G',e)$. Let 
	 $\eta(x_v)=\eta'(x_v)$ for $v \notin \{u_1,u_2,\ldots, u_k\}$ and $\eta(x_v) = \eta'(x_v)+1$ for 
	 $v \notin \{u_1,u_2,\ldots, u_k\}$ and $\eta(x_{v_n})=2$. 
	 Let $\eta''(x_{v_n})=2$, $\eta''(x_{u_i})=1$ for $i=1,2,\ldots,k$ and $\eta''(x)=0$ for other $x$. 
	 Then $$S({\vec{x}}) = - {\vec{x}}^{\eta''} +x_{v_1}A(\vec{x}) + x_{v_2}B(\vec{x})+x_{v_n}^3C({\vec{x}})$$
	 for some polynimals $A(\vec{x})$,  $B(\vec{x})$ and $C({\vec{x}})$.  Let $P_1({\vec{x}}) = x_{v_1}A(\vec{x})P_{G'-e}({\vec{x}})$,  $P_2({\vec{x}}) = x_{v_2}B(\vec{x})P_{G'-e}({\vec{x}})$ and $P_3({\vec{x}}) = x^3_{v_n}C(\vec{x})P_{G'-e}({\vec{x}})$.
	 As $\eta(x_{v_1})=\eta(x_{v_2})=0$ and $\eta(x_{v_n})=2$, it follows from (3) of Observation \ref{obs} that   $c_{P_1,\eta}=c_{P_2,\eta}=c_{P_3,\eta}=0$. By (1) and (2) of Observation \ref{obs}, 
	  $c_{G-e, \eta} = -c_{G'-e, \eta'} \ne 0$. Hence $\eta$ is nice for $(G,e)$.
	 
	 Assume $n \ge 4$.

		  We say an index function $\eta'$ for $G'-e$   {\em special} if 
		    $\eta'(v_{n-1}) \le 1$, $\eta'(v_1) = \eta'(v_2)=0$, $\eta'(u_j) \le 3$ for $j=1,2,\ldots, k$.
		  	  $\eta'(v) \le 2$ for each other boundary vertex $v $ and $\eta'(v) \le 4$ for each interior vertex  $v$.

	\bigskip 
	\noindent
	{\bf
		Case 1.}  $c_{ {G'-e}, \eta'}  \ne 0$ for some   special index function $\eta'$ for $G'-e$.

  Let $\eta(v) = \eta'(v)$ for $v \notin \{u_1, u_2, \ldots, u_k, v_{n-1}\}$ and $\eta(v) = \eta'(v)+1$ for $v \in \{u_1, u_2, \ldots, u_k, v_{n-1}\}$ and $\eta(v_n)=1$. Let $\eta''$ be the index function defined as $\eta''(x_{v_n}) = \eta''(x_{v_{n-1}}) = \eta''(x_{u_1}) = \ldots = \eta''(x_{u_k})=1$ and $\eta''(x)=0$ for other variables $x$.
 Then $$S({\vec{x}}) =  {\vec{x}}^{\eta''} +x_{v_1}A(\vec{x}) + x_{v_n}^2 B(\vec{x})$$
  for some polynimals $A(\vec{x})$ and $B(\vec{x})$. 
  Let $P({\vec{x}}) = {\vec{x}}^{\eta''}P_{G'-e}({\vec{x}})$, $P_1(\vec{x})=  x_{v_1}A(\vec{x})P_{G'-e}(\vec{x})$ and $P_2(\vec{x})= x^2_{v_n}B(\vec{x})P_{G'-e}(\vec{x}) $.
   Then  
   $$P_{G,e}({\vec{x}}) = P({\vec{x}})+P_1({\vec{x}})+P_2({\vec{x}}).$$ As $\eta(v_1)=0$ and $\eta(v_n)=1$,     it follows from (3) of Observation \ref{obs} that
   $c_{P_1, \eta  } =  c_{P_2, \eta  }=0.$  
 By (1) and  (2) of Observation \ref{obs}, we have 
	 $c_{  {G-e}, \eta}=c_{ P, \eta} =   c_{  {G'-e}, \eta'} \ne 0.$ 
	Hence $\eta$ is nice for $(G,e)$.

	\bigskip 
	\noindent
	{\bf
		Case 2.}  $c_{ {G'-e}, \eta'} = 0$ for every special index function $\eta'$ for $G'-e$.

By the minimality of $G$, there is an index function $\eta''$  nice for $(G',e)$.
  Let  $\eta(x_v)=\eta''(x_v)$ for $v \notin \{u_1,u_2,\ldots, u_k\}$ and $\eta(x_v) = \eta''(x_v)+1$ for 
  $v \notin \{u_1,u_2,\ldots, u_k\}$ and $\eta(x_{v_n})=2$. 
	Let $\eta'''(x_{v_n})=2$, $\eta'''(x_{u_i})=1$ for $i=1,2,\ldots,k$ and $\eta''(x) = 0$ for other $x$. Then 
     $$S({\vec{x}})=  {\vec{x}}^{\eta'''}  +x_{v_1}A(\vec{x}) + x_{v_{n-1}} B(\vec{x})+x_{v_n}^3C({\vec{x}})$$
     for some polynimals $A(\vec{x})$, $B(\vec{x})$ and $C(\vec{x})$.
     
     Let $P(\vec{x}) =  {\vec{x}}^{\eta'''} P_{G'-e}  (\vec{x})$,
      $P_1(\vec{x}) =
      x_{v_1}A(\vec{x}) P_{G'-e}(\vec{x})$, $P_2(\vec{x}) =x_{v_{n-1}}B(\vec{x}) P_{G'-e}(\vec{x}) $ and $P_3(\vec{x}) = x_{v_n}^3C(\vec{x}) P_{G'-e}(\vec{x})$.
As $\eta(x_{v_1})=0$ and $\eta(x_{v_n})=2$,  it follows from (3) of Observation \ref{obs} that $c_{ P_1, \eta  }  =c_{P_3, \eta}=0.$  
As $\eta(v_{n-1})\le 2$, $\eta(u_i) = \eta''(u_i)+1 \le 3$,  it follows from (2) of Observation \ref{obs} that
 $c_{P_2, \eta } = c_{  {G'-e}, \eta'}$ 
for a special index function $\eta'$ for $G'-e$ (note that $\eta'(x_{v_{n-1}}) = \eta(x_{v_{n-1}})-1$ and $\eta'(x) \le \eta(x)$ for other $x$).   By our assumption,  $c_{  {G'-e}, \eta'} = 0$. Therefore 
 $c_{P_2, \eta }  =0 $.  
As
$$P_{G-e}({\vec{x}}) = P({\vec{x}}) + P_1({\vec{x}})+P_2({\vec{x}})+P_3({\vec{x}}),$$ by (1) and (2) of Observation \ref{obs}, 	 $c_{ {G-e}, \eta}=c_{P, \eta} =   c_{ {G'-e}, \eta''} \ne 0.$ 
\qed

\section{An alternate proof}

A digraph   $D$ is {\em Eulerian} if $d_D^+(v)=d_D^-(v)$ for every vertex $v$. Assume $G$ is a graph and $D$ is an orientation of $G$. 
Let  $EE(D)$ (respectively, $OE(D)$) be the set of  spanning Eulerian sub-digraphs of $D$ with an even (respectively, an odd) number of edges.     
 Alon and Tarsi \cite{AT} showed that for an index function $\eta$ of $G$, $c_{G, \eta} = \pm ( |EE(D)|-|OE(D)|)$ for an orientation $D$ with $d_D^+(v) = \eta(v)$ for every $v \in V(G)$. 
 Thus to prove that $c_{G, \eta} \ne 0$ is equivalent to show that there is an orientation $D$ of $G$ with $d_D^+(v) = \eta(v)$ for every $v \in V(G)$ for which $|EE(D)| \ne |OE(D)|$.

  \begin{definition}
  	\label{def-nice} Assume $G$ is a plane graph and   $e=v_1v_2$ is a boundary edge of $G$. An orientation $D$ of $G-e$ is a  {\em nice  } for $(G,e)$ if the following hold:
  	\begin{itemize}
  		\item $|EE(D)| \ne |OE(D)|$.
  		\item  $d_D^+(v_1) = d_D^+(v_2)=0$, $d_D^+(v) \le 2$ for every other boundary vertex $v$, and $d_D^+(v) \le 4$ for every interior vertex $v$.
  	\end{itemize}  
  \end{definition}
  
The following theorem is just a restatement of Theroem \ref{thm-2}, and its  proof  is essentially the same as the proof of Theorem \ref{thm-2}. However, the translation from calculating the coefficients of a polynomial to counting Eulerian subgraphs is not completely trivial. We include a proof of this statement for pedagogical   reason.

\begin{theorem}
	\label{thm-main}
	Assume $G$ is a plane graph and $e=v_1v_2$ is a boundary edge of $G$, then $(G,e)$ has a nice orientation.
\end{theorem}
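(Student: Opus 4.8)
\medskip
\noindent\textbf{Proof proposal.} My plan is to replay the induction of Theorem~\ref{thm-2} line by line, replacing index functions by orientations and coefficients by the signed count $\sigma(D):=|EE(D)|-|OE(D)|=\sum_{H}(-1)^{|E(H)|}$, where $H$ ranges over the spanning Eulerian subdigraphs of $D$; a nice orientation is then one meeting the out-degree bounds required of a nice orientation with $\sigma(D)\ne 0$. So I take a minimum counterexample $(G,e)$, $e=v_1v_2$. The one combinatorial fact I will use again and again is the orientation shadow of ``$\eta(v_1)=0$ kills a term'': \emph{if $d^+_D(u)=0$ then no arc $w\to u$ lies in any spanning Eulerian subdigraph of $D$}, since $\mathrm{in}_H(u)\ge 1$ would force $\mathrm{out}_H(u)\ge 1$ and hence $d^+_D(u)\ge 1$. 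I call such an arc dead.

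For the chord case I glue nice orientations $D_1$ of $(G_1,e)$ and $D_2$ of $(G_2,e')$ into $D=D_1\cup D_2$ on $G-e$, where $G_1,G_2$ are the two $e'$-components. Because $e'=xy$ is the boundary edge of $G_2$, both $x$ and $y$ have out-degree $0$ in $D_2$. For any spanning Eulerian subdigraph $H$ of $D$, the imbalance of $H\cap E(D_2)$ is concentrated at $x,y$, is nonpositive there (no out-arcs are available at $x,y$ inside $D_2$), and sums to zero, so it vanishes: $H\cap E(D_2)$ is Eulerian and misses $x,y$, whence $H\cap E(D_1)$ is Eulerian as well. Thus $H\mapsto(H\cap E(D_1),H\cap E(D_2))$ is a bijection onto the product of the Eulerian subdigraphs of $D_1$ and of $D_2$, and since $|E(H)|$ is additive I get $\sigma(D)=\sigma(D_1)\sigma(D_2)\ne 0$; the out-degree bounds are inherited exactly as $\eta=\eta_1+\eta_2$ inherits them in Theorem~\ref{thm-2}.

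When $G$ has no chord I delete $v_n$, put $G'=G-v_n$, and extend an orientation of $G'-e$ across the star at $v_n$ (neighbours $v_1,u_1,\dots,u_k,v_{n-1}$). Wanting $d^+_D(v_1)=0$ forces the arc $v_n\to v_1$, which is dead, and the intended out-degree at $v_n$ then fixes the rest of the star. In the base case $n=3$ (where $v_{n-1}=v_2$) the two out-arcs $v_n\to v_1$ and $v_n\to v_2$ are both dead; in Case~1, where some special orientation $D'$ of $G'-e$ has $\sigma(D')\ne 0$, the unique out-arc $v_n\to v_1$ is dead. In both situations $\mathrm{out}_H(v_n)=0$, hence $\mathrm{in}_H(v_n)=0$, so no Eulerian subdigraph of $D$ touches $v_n$; the Eulerian subdigraphs of $D$ are exactly those of $D'$ with $v_n$ isolated, giving $\sigma(D)=\sigma(D')\ne 0$. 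The bounds $d^+_{D'}(v_{n-1})\le 1$ and $d^+_{D'}(u_i)\le 3$ in the definition of special are precisely what keep $d^+_D$ within the nice range after the star is added.

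The crux, and where I expect the real difficulty, is Case~2, in which $\sigma(D'')=0$ for \emph{every} orientation $D''$ of $G'-e$ with a special out-degree sequence. Starting from a nice orientation $D'$ of $(G',e)$ I orient the star with $d^+_D(v_n)=2$, giving out-arcs $v_n\to v_1$ (dead) and $v_n\to v_{n-1}$ (live) and in-arcs $u_i\to v_n$. Besides the ``good'' subdigraphs isolating $v_n$, which again contribute $\sigma(D')$, there are ``bad'' ones using $v_n\to v_{n-1}$ together with a single $u_i\to v_n$, and I must show these cancel. Deleting the two star arcs from such an $H$ leaves $H'\subseteq D'$ whose only imbalance is $+1$ at $v_{n-1}$ and $-1$ at $u_i$; reversing the arcs of $H'$ in $D'$ produces an orientation of $G'-e$ with out-degree sequence $\tau_i=d^+_{D'}-\mathbf{1}_{v_{n-1}}+\mathbf{1}_{u_i}$, and this is a bijection between such $H'$ and the orientations realizing $\tau_i$. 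Since any two orientations with the same out-degree sequence differ by reversing an Eulerian subdigraph, a parity count shows (up to a global sign independent of $H'$) that the bad contribution for each $i$ equals $\sigma(D_{\tau_i})$ for an orientation $D_{\tau_i}$ realizing $\tau_i$. The bounds making $D'$ nice force each $\tau_i$ to be special ($\tau_i(v_{n-1})=d^+_{D'}(v_{n-1})-1\le 1$ and, as $u_i$ is a boundary vertex of $G'$, $\tau_i(u_i)=d^+_{D'}(u_i)+1\le 3$), so $\sigma(D_{\tau_i})=0$ by the case hypothesis and the bad terms vanish, leaving $\sigma(D)=\sigma(D')\ne 0$. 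Pinning down this reversal bijection together with its sign is the delicate heart of the argument; it is exactly the orientation-theoretic counterpart of the identity $c_{P_2,\eta}=c_{G'-e,\eta'}$ obtained from Observation~\ref{obs}(2) in Theorem~\ref{thm-2}.
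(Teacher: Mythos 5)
Your proposal is correct and follows essentially the same route as the paper: the same induction on a minimum counterexample, the same chord-splitting via the fact that all arcs of $D_2$ at the sinks $x,y$ are dead, the same extension over the deleted boundary vertex $v_n$ in the base case and in Case~1, and the same decomposition in Case~2 into subdigraphs isolating $v_n$ plus, for each $i$, those using $(v_n,v_{n-1})$ and $(u_i,v_n)$. The only cosmetic difference is in Case~2, where you realize the cancellation through the bijection between subdigraphs of the nice orientation with imbalance $+1$ at $v_{n-1}$ and $-1$ at $u_i$ and the orientations realizing the special out-degree sequence $\tau_i$; the paper gets the identical cancellation by choosing a directed cycle $C_i$ through $(u_i,v_n)$ (necessarily through $(v_n,v_{n-1})$), reversing the path $C_i\cap D''$ to obtain a special orientation $D'_i$, and taking symmetric differences with $C_i^{-1}$ --- that path is precisely one admissible choice of your base point $H'_0$, and the sign bookkeeping you describe matches the paper's even/odd length analysis of $C_i$.
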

\begin{proof}
	Assume the theorem is not true and $G$ is a minimum counterexample.
	
	First we consider the case that $G$ has a chord  $e'=xy$. Let $G_1, G_2$ be the two $e'$-components  with $e \in G_1$.
	
	By the minimality of $G$, $(G_1,e)$ has a nice orientation   $D_1$, and $(G_2,e')$ has a nice orientation $D_2$. Let $D=D_1 \cup D_2$. Edges in $D_2$ incident to $x,y$ are not contained in any directed cycles, and hence are not contained in any Eulerian sub-digraph of $D$. 
	Therefore  \begin{eqnarray*}
		EE(D) &=& EE(D_1) \times EE(D_2) + OE(D_1) \times OE(D_2), \\
		OE(D) &=&  EE(D_1) \times OE(D_2) + OE(D_1) \times EE(D_2).
	\end{eqnarray*}
	
	So $|EE(D)|-|OE(D)| = (|EE(D_1)|-|OE(D_1)|)(|EE(D_2)|-|OE(D_2)|) \ne 0$.
	Hence $D$ is a nice orientation of $(G,e)$.
	
	Assume $G$ has no chord. Assume $B(G)=(v_1, v_2, \ldots, v_n)$.
	
	Let $G'=G-v_n$. If $n=3$, i.e. $B(G)$ is a triangle, then let $D'$ be a nice orientation for $(G',e)$, and let $D$ be obtained from $D'$ by adding arcs $(v_3,v_1)$ and $(v_3, v_2)$. As $v_1, v_2$ are sinks, no edge incident to $v_n$ is contained in a directed cycle and hence  $EE(D)=EE(D')$ and $OE(D)=OE(D')$. Therefore $D$ is a nice orientation for $(G,e)$. 
	
	Assume $n \ge 4$.
	Let $v_1, u_1, u_2, \ldots, u_k, v_{n-1}$ be the neighbours of $v_n$.
	
	We call an orientation $D$ of $(G',e)$   {\em special} if the following hold:
	\begin{itemize}
		\item $v_1, v_2$ has out-degree $0$,
		$v_{n-1}$ has out-degree at most $1$, and each of $u_1,u_2, \ldots, u_k$ has out-degree at most $3$,
		each other boundary vertex has out-degree at most $2$.
		\item  every interior vertex has out-degree at most $4$.
	\end{itemize}
	
	\bigskip 
	\noindent
	{\bf
		Case 1.} $(G',e)$ has a special orientation $D'$ with $|EE(D')| \ne |OE(D')|$.
	
	Let $D$ be the orientation of $G-e$ which is obtained from $D'$ by adding arcs $$(v_n, v_1), (v_{n-1}, v_n), (u_1, v_n), \ldots, (u_k, v_n).$$
	Then $D$ is a nice orientation of $(G,e)$, as $EE(D')=EE(D)$ and $OE(D')=OE(D)$.
	
	\bigskip 
	\noindent
	{\bf
		Case 2.} For any special orientation $D'$ of $(G',e)$, $|EE)D')| = |OE(D')|$.   
	
	By the minimality of $G$, $(G',e)$ has a nice orientation $D''$. Let $D$ be the orientation of $G-e$   obtained from $D''$ by adding arcs $(v_n, v_1), ( v_n, v_{n-1}), (u_1, v_n), \ldots, (u_k, v_n)$.

	For $i=1,2,\ldots, k$, let $$EE_i(D) = \{H \in EE(D): (u_i,v_n) \in H\}, \ OE_i(D)  = \{H \in OE(D): (u_i,v_n) \in H\}.$$

	For $i=1,2,\ldots, k$, if $ EE_i(D) \cup OE_i(D) \ne \emptyset$, then let $C_i$ be a directed cycle in $D$ containing $(u_i,v_n)$.  Note that every directed edge of an Eulerian digraph   is contained in a directed cycle, and $C_i$ must contain $(v_n,v_{n-1})$. 
	Let $D'_i$ be the orientation of $G'$ which is obtained from $D''$ by reversing the direction of edges in $C_i \cap D''$ (note that $C_i \cap D'' =C_i- \{(u_i,v_n), (v_n, v_{n-1})\}$ is a directed path from $v_{n-1}$ to $u_i$).
	
	Observe that $D'_i$ is a special orientation of $(G',e)$. Hence
	  $|EE(D'_i)| = |OE(D'_i)|$.
	
	Now we show that $|EE_i(D)| = |OE_i(D)|$ for $i=1,2,\ldots,k$. If 
	$ EE_i(D) \cup OE_i(D) = \emptyset$, then this is trivially true.
	
	Assume  $ EE_i(D) \cup OE_i(D) \ne \emptyset$.
	
	For each $H \in EE_i(D) \cup OE_i(D)$,   $H \vartriangle C_i^{-1} \in EE(D'_i) \cup OE(D'_i)$.
	Here $C_i^{-1}$ is the reverse of $C_i$ and $H \vartriangle C_i^{-1}$ is the symmetric difference of $H$ and $C_i^{-1}$, i.e., the digraph
	obtained from the edge disjoint union of     $H$ and $C^{-1}_i$   by deleting digons. Note that the symmetric difference of any two Eulerian digraphs is an Eulerian digraph.  Moreover, any edge of $C_i$ contained in $H$ will form a digon with the corresponding edge in $C_i^{-1}$ and hence is deleted. In particular, $(u_i,v_n), (v_n, v_{n-1})$ are edges of $C_i$ contained in $H$ and are deleted. 
	So $H \vartriangle C_i^{-1}$ is a sub-digraph of $D''$.

	Similarly, for each $H \in EE(D'_i) \cup OE(D'_i)$, $H \vartriangle C_i  \in EE_i(D) \cup OE_i(D)$. As $(H \vartriangle C_i ) \vartriangle C_i^{-1} = H$, $\phi(H) := H \vartriangle C_i^{-1}$ is   a one-to-one correspondence between $EE(D'_i) \cup OE(D'_i)$ and $EE_i(D) \cup OE_i(D)$. If   $C_i$ is of even length, then $|E(\phi(H))|$ and $|E(H)|$ have the same parity; if $C_i$ is of odd length, then  $|E(\phi(H))|$ and $|E(H)|$ have different same parities. So if $C_i$ is of even length, then
	$|EE_i(D)| = |EE(D'_i)|, |OE_i(D)| = |OE(D'_i)|$;
	if $C_i$ is of odd length, then $|EE_i(D)| = |OE(D'_i)|, |OE_i(D)| = |EE(D'_i)|$.
	In any case, $|EE_i(D)| = |OE_i(D)|$.
	
	Now $$EE(D) = EE(D'') \cup \bigcup_{i=1}^k EE_i(D), \ \ OE(D) = OE(D'') \cup \bigcup_{i=1}^k OE_i(D).$$
	The unions above are disjoint unions. So $|EE(D)|-|OE(D)| = |EE(D'')|-|OE(D'')| \ne 0$.
\end{proof}

I would like to thank Grzegorz Gutowski for bringing this problem to my attention, and thank Jaroslaw Grytczuk for pointing out that the problem was contained in \cite{HE}.

\bibliographystyle{unsrt}

\end{document}